\title{Curvatures and anisometry of maps}
\author{Beno\^{\i}t R. Kloeckner}
\address{Universit\'e de Grenoble I, Institut Fourier\\ CNRS UMR 5582\\ BP 74\\
   38402 Saint Martin d'H\`eres cedex\\ France}
\email{benoit.kloeckner@ujf-grenoble.fr}
\newcommand{\Ric}{\operatorname{Ric}}
\newcommand{\Scal}{\operatorname{Scal}}
\newcommand{\inj}{\operatorname{inj}}
\newcommand{\jac}{\operatorname{jac}}
\newcommand{\Vol}{\operatorname{Vol}}
\newcommand{\aniso}{\operatorname{aniso}}
\newcommand{\dist}{\operatorname{dist}}
\newcommand{\Candle}{\operatorname{Candle}}
\newcommand{\LCD}{\operatorname{LCD}}
\renewcommand{\le}{\leqslant}
\renewcommand{\ge}{\geqslant}
\newcommand{\dd}{\mathrm{d}}
\newcommand{\argth}{\operatorname{argth}}
\theoremstyle{definition}
\newtheorem*{defi*}{Definition}
\newtheorem*{rema*}{Remark}
\begin{document}

\begin{abstract}
We prove various inequalities measuring how far from an isometry
a local map from a manifold of high curvature to a manifold of low curvature
must be. We consider the cases of volume-preserving, conformal and
quasi-conformal maps. The proofs relate to a conjectural
isoperimetric inequality for manifolds whose curvature is bounded above, and
to a higher-dimensional generalization of the Schwarz-Ahlfors lemma.
\end{abstract}

\maketitle

\section{Introduction}

One of the basic facts of Riemannian geometry is that curvatures
are isometry invariants: this explains for example why one cannot
design a perfect map of a region on the earth. In this article, we
shall be interested in quantifying this fact: how far from being an isometry
a map from a region of a manifold to another manifold must be,
when the source and target manifolds satisfy incompatible curvature
bounds?

When the source manifold is the round $2$-sphere and the target manifold
is the Euclidean plane, this question is a cartography problem:
a round sphere is a relatively good approximation of the shape
of the Earth. It has been considered by Milnor \cite{Milnor} who
described the best map when the source region is a spherical cap.
Surprisingly, it seems like no other cases of the general question above
have been considered.

\subsection{Distortion and anisometry}

To fill this gap, one has first to ask how we should measure the isometric default
of a map $\varphi:D\subset M \to N$ from a domain in a manifold $M$ to a manifold $N$,
assumed to be a diffeomorphism on its image. Milnor uses the distortion,
defined as follows. Let $\sigma_1=\sigma_1(\varphi)$ and $\sigma_2=\sigma_2(\varphi)$
be the Lipschitz constants of $\varphi$, i.e.
\[\sigma_1 d(x,y) \le d(\varphi(x),\varphi(y)) \le \sigma_2 d(x,y) \quad \forall x,y\in D\]
and $\sigma_1$, $\sigma_2$ are respectively the greatest and least numbers satisfying 
such an inequality. Then the \emph{distortion} of $\varphi$ is the number
$\dist(\varphi) = \log(\sigma_2/\sigma_1)$.

However, when the target manifold is not Euclidean, the distortion is ill-suited: it is
zero for maps that are not isometries, but mere homotheties. More disturbing is the case
when $M$ is positively curved and $N$ is negatively curved: to minimize distortion, one
is inclined to take $\varphi$ with a very small image, so that the curvature of $N$ barely
matters. To make this case more interesting, we propose the following definition of 
\emph{anisometry}:
\[\aniso(\varphi) = |\log \sigma_1| + |\log \sigma_2|.\]
This quantity generalizes distortion in the sense that when $N=\mathbb{R}^n$,
\[\inf_\varphi \aniso(\varphi) = \inf_\varphi \dist(\varphi).\]

\subsection{Azimuthal maps}

To describe our results we will need to introduce a specific family of maps
between model spaces. All considered manifolds will be of the same fixed dimension $n$;
we set $X_\kappa$ for the simply connected manifold of constant curvature $\kappa$
(thus a sphere, the Euclidean space or a hyperbolic space).

Given a point $x\in X_\kappa$, we have polar coordinates $(t,u)$ ($t$ a positive real,
$u$ a unit tangent vector at $x$) given by the exponential map:
\[y = \exp_x(tu)\]
where $t$ is less than the conjugate radius and $y$ may be any point but the antipodal
point to $x$ (when $\kappa>0$).

\begin{defi*}
An \emph{azimuthal} map is a map $\varphi: D\subset X_\rho \to X_\kappa$
where $D$ is a geodesic ball, which reads in polar coordinates centered
at $x$ and $\varphi(x)$ as
\[\varphi(t,u)=(R(t),L(u))\]
where $L$ is a linear isometry from $T_x X_\rho$ to $T_{\varphi(x)} X_\kappa$
and $R$ is a differentiable function. In other words, we have
\[\varphi(\exp_x(tu)) =\exp_{\varphi(x)}(R(t) L(u)).\]

The function $R$ is then called the \emph{distance function} of $\varphi$.
\end{defi*}

As we consider only model spaces, $L$ is irrelevant and the function $R$ defines 
a unique azimuthal map up to isometries.
The azimuthal map associated to each of the following distance functions bears
a special name:
\begin{itemize}
\item $R(t)=t$: \emph{equidistant} azimuthal map,
\item $R(t)= \sigma t$ with $\sigma\in (0,1)$: $\sigma$-\emph{contracting} azimuthal map
\end{itemize}
Moreover, given $\rho$ and $\kappa$ there exists exactly one family of conformal azimuthal
maps and a unique volume-preserving map $B_\rho(\alpha)\to X\kappa$ (see below for details).

\subsection{Description of the results}

We shall not state our results in the greatest generality in this introduction,
please see below for details.

Our main results have the following form: we assume $M$ satisfies some
kind of lower curvature bound associated with a parameter $\rho$, 
that $N$ satisfies some kind of upper curvature bound (or more general 
geometric assumption) associated with a parameter $\kappa<\rho$, and
that $\varphi$ is a map (possibly satisfying extra assumptions)
from a geodesic ball of center $x$ and radius $\alpha$
in $M$ to $N$. 

Our methods provide half-local results, and we
 shall always assume that $\alpha$ is bounded above by 
some number. This bound shall be explicit most of the time and depends only on synthetic
geometrical properties of $M$ and $N$. In some cases (e.g. when
the target is a Hadamard manifold) this bound will be completely harmless.

We then conclude that there is an azimuthal map 
$\bar\varphi:B_\rho(\alpha) \to X_\kappa$ (where $B_\rho(\alpha)$
is any geodesic closed ball of radius $\alpha$ in $X_\rho$) such that
\[\aniso(\varphi) \ge \aniso(\bar \varphi)\]
with equality if and only if $\varphi$ is conjugated
to $\bar \varphi$ by isometries.

For simplicity, we shall write $\Ric_M\ge\rho$ to mean
that the Ricci tensor and the metric tensor of $M$ satisfy
the usual bound
\[\Ric_x(u,u) \ge \rho\cdot (n-1) g_x(u,u) \quad \forall x,u.\]
Similarly, $K_N\le \kappa$ means that the sectional 
curvature of $N$ is not greater than $\kappa$ at any
tangent $2$-plane.

We shall always assume implicitly that $M$ (or more generally
$B_M(x,\alpha)$) and $N$ are complete; recall that they have the
same dimension $n$.

\begin{theo}[General maps]\label{theo:intro-general}
Assume $\Ric_M\ge\rho$, $K_N\le \kappa$ where $\rho >\kappa$,
and $\alpha \le A_1(M,N)$ where $A_1(M,N)$ is an explicit positive
constant.

Then any map
$\varphi:B(x,\alpha)\subset M \to N$ satisfies
\[\aniso(\varphi) \ge \aniso(\bar\varphi)\]
where $\bar \varphi$ is:
\begin{itemize}
\item the equidistant azimuthal map $B_\rho(\alpha)\to X_\kappa$ when $\kappa\ge 0$,
\item the $\sigma$-contracting azimuthal map $B_\rho(\alpha)\to X_\kappa$ when
  $\kappa<0$, where $\sigma$ is such that the boundaries of $B_\rho(\alpha)$
  and $B_\kappa(\sigma\alpha)$ have equal volume.
\end{itemize}
Moreover in case of equality $\varphi$ and $\bar\varphi$ are conjugated
by isometries (in particular, the source and image of $\varphi$ have constant
curvature $\rho$ and $\kappa$).
\end{theo}

One can write $\aniso(\bar\varphi)$ explicitly, see below. This theorem
is proved using a rather direct generalization of Milnor's argument
who considers the constant curvature, $2$-dimensional case. 
\begin{rema*}\begin{enumerate}
\item It is interesting to see that
the sign of $\kappa$ has such an influence on the optimal map: when $\kappa>0$ the
best map is isometric along rays issued from the center, and increases distances
in the orthogonal directions, while when $\kappa<0$ the best map induces
an isometry on the boundaries but contracts the radial rays. Of course, when $\kappa=0$
all $\sigma$-contracting azimuthal maps are equivalent up to an homothety,
and as long as $\sigma_1\le1\le\sigma_2$ their anisometries are equal.
\item The hypothesis on $N$ can be relaxed thanks to the generalized
Günther inequality proved with Greg Kuperberg \cite{KK:Gunther}.
In particular, $K_N\le \kappa$ can be replaced by mixed curvatures bounds like
\[K_N\le\rho \quad\mbox{and}\quad \Ric_N \le (n-1)\kappa-n\rho\]
see Section \ref{sec:candle}
and Theorem \ref{theo:general} for the most general hypothesis
and the above reference for various classical assumptions that imply this
general hypothesis.
\item The precise expression of $A_1$ is given page \pageref{eq:A1negative}.
In many cases one can adapt 
the result and its proof to larger $\alpha$ but
we favored clarity over exhaustivity. For example,
what happens for $\alpha$ close to $\frac{\pi}{\sqrt{\rho}}$
is that the boundary of $B_\rho(\alpha)$ becomes very small, and  
one can improve the equidistant azimuthal map by making it dilating along the rays.
\end{enumerate}
\end{rema*}

We shall then consider maps satisfying special conditions. Two prominent examples
are \emph{volume-preserving} maps and \emph{conformal} maps. In cartography, 
both make sense: area is obviously a relevant geographic information, and for many 
historical uses
(e.g. navigation) measurement of angles on the map have been needed. Moreover,
asking a map to be conformal means
that zooming into the map will decrease arbitrarily the distortion of
a smaller and smaller region. We therefore ask whether in general,
asking $\varphi$ to be volume-preserving or conformal increases the anisotropy lower bound
by much. 

In the theorems below, we shall make the assumption that $N$ satisfies the best 
isoperimetric inequality holding on $X_\kappa$, meaning that
for all smooth $\Omega\subset N$, 
\[\Vol(\partial\Omega)\ge I_\kappa(\Omega)\]
where $I_\kappa$ is the isoperimetric profile of $X_\kappa$ defined
by
\[I_\kappa(V) = \inf_{\Omega\subset X_\kappa}\{\Vol(\partial \Omega) \,|\, \Vol(\Omega)=V \}.\]
This assumption can be replaced by $K_N\le \kappa$ in some cases.

One says that $n$ is a \emph{Hadamard manifold} if $K_N\le 0$
and $N$ is simply connected; it is conjectured that all Hadamard manifolds
satisfy the isoperimetric inequality of $X_\kappa$ whenever $K_N\le \kappa$, but
this conjecture has only been proved in a handful of cases:
when $n=2$ \cite{Weil,Aubin}, $n=3$ \cite{Kleiner},
$(n=4, \kappa= 0)$ \cite{Croke} and $(n=4,\kappa<0)$ for small enough domains
\cite{KK:petitprince}. Moreover, the similar conjecture when $\kappa>0$
holds in dimension $n=4$ for uniquely geodesic domains \cite{KK:petitprince}.
When $n=4$ the curvature assumption
can generally be relaxed as for Theorem \ref{theo:intro-general}, see 
Section \ref{sec:candle} below and \cite{KK:petitprince}.

This means that in most dimensions,
our results below hold under a curvature assumption only
conditionally to a strong conjecture; but note that even in the case when 
$N= X_\kappa$ these results are new.

\begin{theo}[Volume-preserving maps]\label{theo:intro-area}
Assume $\Ric_M\ge\rho$, $N$ satisfies the best isoperimetric inequality
holding on $X_\kappa$ for some $\kappa <\rho$, and $\alpha\le \inj(x)$.

Then any volume-preserving map
$\varphi:B(x,\alpha)\subset M \to N$ satisfies
\[\aniso(\varphi) \ge \aniso(\bar\varphi)\]
where $\bar \varphi$ is the unique volume-preserving azimuthal map
$B_\rho(\alpha)\to X_\kappa$.

Assume further that the only domains in $N$ satisfying the equality case
in the isoperimetric inequality are balls isometric to geodesic balls in $X_\kappa$.
Then whenever $\aniso(\varphi)=\aniso(\bar\varphi)$, the domain of 
$\varphi$ has constant curvature $\rho$ and its range is isometric to a
constant curvature ball $B_\kappa(R(\alpha))$. However, there are uncountably
many different maps achieving equality.
\end{theo}

\begin{rema*}
Here we have put little restriction on $\alpha$ (we only restrict
it below the injectivity radius at $x$ for simplicity), but in fact stronger
restriction can appear when one wants to apply the result. Indeed, if one is only able
to show that small enough domains of $N$ satisfy the desired isoperimetric inequality,
then one can still use Theorem \ref{theo:intro-area} for small enough $\alpha$:
then a map $B(x,\alpha)\to N$ either has a small image, or a large $\sigma_2$.
\end{rema*}

\begin{theo}[Conformal maps]\label{theo:intro-conformal}
Assume $\Ric_M\ge\rho$, $N$ satisfies the best isoperimetric inequality
holding on $X_\kappa$ for some $\kappa <\rho$, and $\alpha\le A_3(M,N)$
where $A_3(M,N)$ is an explicit positive constant.

Then any conformal map
$\varphi:B(x,\alpha)\subset M \to N$ satisfies
\[\aniso(\varphi) \ge \aniso(\bar\varphi)\]
where $\bar \varphi$ is:
\begin{itemize}
\item the conformal azimuthal map $B_\rho(\alpha)\to X_\kappa$
      with $R'(0)=1$ when $\kappa\ge 0$,
\item the conformal azimuthal map $B_\rho(\alpha)\to X_\kappa$ 
      that induces an isometry on the boundaries
      when $\kappa<0$.
\end{itemize}

Assume further that the only domains in $N$ satisfying the equality case
in the isoperimetric inequality are balls isometric to geodesic balls in $X_\kappa$.
Then whenever $\aniso(\varphi)=\aniso(\bar\varphi)$,
the maps $\varphi$ and $\bar\varphi$ are conjugated
by isometries (in particular, the domain and range of $\varphi$ have constant
curvature $\rho$ and $\kappa$), except that when $\kappa=0$ one can compose
$\bar\varphi$ with any homothety such that we still have $\sigma_1\le 1\le\sigma_2$,
and still get an optimal map. 
\end{theo}

\begin{rema*}
We shall see that $A_3$ can in fact be chosen independently of $N$
(but depending on $\kappa$). Moreover, when $\kappa\le 0$
we can take $A_3=\inj(x)$.
\end{rema*}

Conformal maps are rare in higher dimension, so we also
tackle quasi-conformal maps, whose angular distortion is controlled. 
Recall that a smooth map $\varphi$ is said to be $Q$-quasiconformal if at
each point $x$ in its domain, we have $\dist(D\varphi_x)\le Q$, i.e. its infinitesimal
distortion is uniformly bounded; conformal maps are precisely the $1$-quasiconformal maps.
\begin{theo}\label{theo:intro-quasiconf}
Assume $\Ric_M\ge\rho$, $N$ satisfies the best isoperimetric inequality
holding on $X_\kappa$ for some $\kappa <\rho$, let $Q$ be a number greater than $1$
and assume $\alpha\le A_4(M,N,Q)$ where $A_4(M,N,Q)$ is some positive constant.

Then any $Q$-quasiconformal map
$\varphi:B(x,\alpha)\subset M \to N$ satisfies
\[\aniso(\varphi) \ge \aniso(\bar\varphi)\]
where $\bar \varphi$ is an explicit $Q$-conformal azimuthal map, which is
$C^1$ but not $C^2$.

Assume further that the only domains in $N$ satisfying the equality case
in the isoperimetric inequality are balls isometric to geodesic balls in $X_\kappa$.
Then whenever $\aniso(\varphi)=\aniso(\bar\varphi)$,
the maps $\varphi$ and $\bar\varphi$ are conjugated
by isometries (in particular, the domain and range of $\varphi$ have constant
curvature $\rho$ and $\kappa$), except that when $\kappa=0$ one can compose
$\bar\varphi$ with any homothety such that we still have $\sigma_1\le 1\le\sigma_2$,
and still get an optimal map. 
\end{theo}

\begin{rema*}
Here the constant $A_4$ is less explicit than in the other result,
but it is still perfectly constructive. Moreover we shall see
that when $\kappa\le 0$, we can take $A_4=\inj(x)$.
\end{rema*}

It is also interesting to compare what we obtain from the above inequalities
when $\alpha$ is small.
\begin{coro}\label{theo:intro-infinitesimal}
If $\Ric_M\ge\rho$ and $K_N\le\kappa$, any map
$\varphi:B(x,\alpha)\subset M \to N$ satisfies
\[\aniso(\varphi) \ge \frac16 (\rho-\kappa) \alpha^2 + o(\alpha^2).\]
If $\varphi$ is conformal, then
\[\aniso(\varphi) \ge \frac14 (\rho-\kappa) \alpha^2 + o(\alpha^2).\]
If $\varphi$ is volume-preserving, then
\[\aniso(\varphi) \ge \frac{n}{2(n+2)}(\rho-\kappa) \alpha^2 + o(\alpha^2).\]
\end{coro}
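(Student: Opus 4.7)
The plan is to apply Theorems~\ref{theo:intro-general}, \ref{theo:intro-area} and~\ref{theo:intro-conformal} to reduce each of the three bounds to the infinitesimal computation of $\aniso(\bar\varphi)$ for the corresponding optimal azimuthal map. For the volume-preserving and conformal theorems the isoperimetric assumption on $N$ is used only locally: on sufficiently small balls the hypothesis $K_N\le\kappa$ implies the isoperimetric inequality of $X_\kappa$ by standard comparison, so the three theorems do apply in the regime $\alpha\to 0$. The inequality is trivial when $\kappa\ge\rho$, so I may assume $\rho>\kappa$. Writing $s_\kappa(r)=r-\kappa r^3/6+O(r^5)$ for the Jacobi sine function, an azimuthal map with distance function $R$ has radial stretch $R'(t)$ and orthoradial stretch $s_\kappa(R(t))/s_\rho(t)$ at distance $t$ from the center.

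For the general case, $\bar\varphi$ is equidistant ($R(t)=t$) when $\kappa\ge0$, giving immediately $\sigma_1=1$ and $\sigma_2=s_\kappa(\alpha)/s_\rho(\alpha)=1+(\rho-\kappa)\alpha^2/6+o(\alpha^2)$. When $\kappa<0$, the defining equation $s_\rho(\alpha)=s_\kappa(\sigma\alpha)$ of the $\sigma$-contracting map expands to $\sigma=1-(\rho-\kappa)\alpha^2/6+o(\alpha^2)$, so $\sigma_1=\sigma$ and $\sigma_2=1$. Either subcase yields $\aniso(\bar\varphi)=(\rho-\kappa)\alpha^2/6+o(\alpha^2)$.

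In the conformal case I would write $R(t)=t+at^3+o(t^3)$ (normalizing $R'(0)=1$; the $\kappa<0$ subcase expands symmetrically from the boundary and gives the same asymptotic) and substitute into the conformal ODE $R'(t)=s_\kappa(R(t))/s_\rho(t)$. Matching the $t^2$-coefficient forces $a=(\rho-\kappa)/12$, so that $R'(\alpha)=1+(\rho-\kappa)\alpha^2/4+o(\alpha^2)$. Since $R'$ is increasing to leading order, $\sigma_1=1$ and $\sigma_2=R'(\alpha)$, which produces the constant $1/4$.

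For the volume-preserving case the defining equation is $R'(t)\,\bigl(s_\kappa(R(t))/s_\rho(t)\bigr)^{n-1}=1$. The same ansatz, together with the vanishing of the $t^2$-coefficient of $\log R'(t)+(n-1)\log(s_\kappa(R(t))/s_\rho(t))$, produces the linear relation $(n+2)\,a+(n-1)(\rho-\kappa)/6=0$, hence radial stretch $R'(\alpha)=1-\frac{n-1}{2(n+2)}(\rho-\kappa)\alpha^2+o(\alpha^2)$ and orthoradial stretch $1+\frac{1}{2(n+2)}(\rho-\kappa)\alpha^2+o(\alpha^2)$. The first is decreasing in $t$ and the second increasing at leading order, so both extrema are attained at the boundary; summing the absolute logarithms gives $\frac{n}{2(n+2)}(\rho-\kappa)\alpha^2+o(\alpha^2)$. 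The main obstacle I anticipate is this final bookkeeping: one must verify that the extremal radial and orthoradial stretches are indeed attained at $t=\alpha$ (which follows from the signs of the leading Taylor coefficients) and combine the two contributions in the proportion $(n-1):1$ that yields the coefficient $n/(2(n+2))$. The other two cases reduce to a one-variable Taylor expansion that is essentially immediate once the optimal azimuthal map has been identified.
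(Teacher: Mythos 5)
Your proposal is essentially the paper's route: apply the three main theorems, identify the optimal azimuthal map, and Taylor-expand its anisometry. The paper does not spell out the computations (it records the general and conformal cases as corollaries following Theorems~\ref{theo:general} and~\ref{theo:conformal}), and your expansions are correct in all three cases, including the bookkeeping $(n-1):1$ split in the volume-preserving case and the observation that the $\kappa<0$ normalizations yield the same leading term.

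One sentence in your preamble is too quick. The claim that ``$K_N\le\kappa$ implies the isoperimetric inequality of $X_\kappa$ on sufficiently small balls by standard comparison'' is not quite available as stated: the known local isoperimetric comparisons (Johnson--Morgan under a sectional bound, Druet under a scalar bound) require \emph{strict} inequality $K_N<\kappa$ (resp.\ $\Scal_N<\kappa$), except in low dimensions. The paper's own remark after the corollary flags exactly this. The fix is routine---apply the volume-preserving and conformal theorems with $\kappa$ replaced by $\kappa+\varepsilon$, observe that the leading coefficient is continuous in $\kappa$, and let $\varepsilon\to 0$, which recovers the stated bound in the $\liminf$ sense implicit in the $o(\alpha^2)$ notation---but the step deserves to be made explicit rather than attributed to ``standard comparison,'' since it is precisely where the Cartan--Hadamard-type conjecture is bypassed.
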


\begin{rema*}\begin{enumerate}
\item In this Corollary, one can easily replace the curvature assumptions
by scalar curvature bounds, since only small balls are considered. Note
that the isoperimetric inequality needed in Theorems \ref{theo:intro-area}
and \ref{theo:intro-conformal} has been proved to be true for small enough domains
under the curvature assumption $K_N<\kappa$ (or even $K_N\le\kappa$ in some cases) 
by Johnson and Morgan \cite{Johnson-Morgan} and under $\Scal_N<\kappa$  by Druet \cite{Druet}.
To obtain a Taylor series, these strict assumptions are sufficient
(but then the remainder term cannot be made explicit).
\item In all our results, one consider maps from the higher-curvature manifold to the lower-curvature
one. These results imply similar estimates for maps $\varphi : B(y,\alpha)\subset N\to M$, 
because either such a map contracts some distances by much (hence has large anisometry),
or its image contains a ball of radius bounded below, allowing us to apply the results
above to $\varphi^{-1}$. However, the estimates one gets that way are certainly not
sharp, and we do not know whether $\bar\varphi^{-1}$ is optimal
in any of the situation treated above; it seems that even the case of a map from a ball 
in the plane to a round $2$-sphere is open. One might want to perturb the equidistant 
azimuthal map to enlarge the boundary of its image, so as to limit the distortion
along the boundary. It is not clear whether this can be achieved without increasing distortion 
too much anywhere else.
\end{enumerate}
\end{rema*}


\subsubsection*{Organization of the paper}
Next section gives notations and some background. We prove our 
main results in the following three sections (general maps, then volume-preserving maps,
then conformal and quasi-conformal maps). 
The technique we use in the conformal and quasiconformal cases turns
out to have been used by Gromov to generalize the Schwarz-Pick-Ahlfors lemma.
In the final Section \ref{sec:appendix}, we shall state and prove a result of this flavor that seems
not to be in the literature (but certainly is in its topological closure).

\subsubsection*{Acknowledgments}

It is a pleasure to thank Charles Frances, Étienne Ghys and Pierre Pansu for
interesting discussions related to the content of the present article.

\section{Toolbox}

\subsection{Notations}
Let $X_\kappa$ be the model space of curvature $\kappa$ and
dimension $n$, i.e. a round sphere when $\kappa>0$,
the Euclidean space when $\kappa=0$, and a hyperbolic space
when $\kappa<0$.

We denote by $B_M(x,t)$ (respectively $S_M(x,t)$) the geodesic closed ball 
(respectively sphere) of radius $t$ and center $x$ in $M$.
When there is no ambiguity, we let
$B(t)=B_M(x,t)$ and $S(t)=S_M(x,t)$.
To simplify notation, we set $B_\kappa(t)$ (respectively $S_\kappa(t)$)
for any geodesic closed ball (respectively sphere) of radius
$t$ in $X_\kappa$. 

The volumes of manifolds, submanifolds and domains shall be denoted either
by $\Vol(\cdot)$ or $|\cdot|$.
We let $\omega_{n-1}=|S_0(1)|$ be the $(n-1)$-dimensional volume 
of the unit sphere in $X_0=\mathbb{R}^n$.

When there is no ambiguity, $\sigma_i$ shall denote $\sigma_i(\varphi)$.

When $x$ is a point in a manifold and $u$ a tangent vector at $x$,
we let $\gamma_u(t)=\exp_x(tu)$ be the time $t$ of the geodesic 
issued from $x$ with velocity $u$.

We shall denote by $T^1 M$ the unit tangent bundle of a Riemannian manifold $M$,
by $\inj(x)$ the injectivity radius at $x\in M$ and by $\inj(M)$ the injectivity
radius of $M$.

\subsection{Geometry of model spaces}

The model spaces $X_\kappa$ are well understood, let us recall a few
facts about them.

\subsubsection{Trigonometric functions}

It will be convenient to use the functions $\sin_\kappa$ defined by
\[ \sin_\kappa(a) = \begin{cases}
  \frac{\sin(\sqrt{\kappa} a)}{\sqrt{\kappa}} &\mbox{if } \kappa>0 \\
  a &\mbox{if } \kappa=0 \\
  \frac{\sinh(\sqrt{-\kappa} a)}{\sqrt{-\kappa}} &\mbox{if } \kappa<0
\end{cases}\]
We then set 
\[\cos_\kappa(a):= \sin_\kappa'(a) = \begin{cases}
  \cos(\sqrt{\kappa} a) &\mbox{if } \kappa>0 \\
  1 &\mbox{if } \kappa=0 \\
  \cosh(\sqrt{-\kappa} a) &\mbox{if } \kappa<0
\end{cases}\]
and
\[\tan_\kappa(a) := \frac{\sin_\kappa(a)}{\cos_\kappa(a)} = \begin{cases}
  \frac{\tan(\sqrt{\kappa} a)}{\sqrt{\kappa}} &\mbox{if } \kappa>0 \\
  a &\mbox{if } \kappa=0 \\
  \frac{\tanh(\sqrt{-\kappa} a)}{\sqrt{-\kappa}} &\mbox{if } \kappa<0
\end{cases}\]
We shall also use occasionally
\[\arctan_\kappa(x) := \tan_\kappa^{-1}(x) = \begin{cases}
  \frac{\arctan(\sqrt{\kappa} x)}{\sqrt{\kappa}} &\mbox{if } \kappa>0 \\
  x &\mbox{if } \kappa=0 \\
  \frac{\argth(\sqrt{-\kappa} x)}{\sqrt{-\kappa}} &\mbox{if } \kappa<0
\end{cases}\]
and we have the derivatives $\tan_\kappa'=1+\kappa \tan_\kappa^2$
and $\arctan_\kappa'(x)=\frac1{1+\kappa x^2}$.

A trigonometric formula that will prove useful is
\[\sin_\kappa(2\arctan_\kappa x) = \frac{2x}{1+\kappa x^2}.\]

We shall need the following Taylor series:
\begin{align*}
\sin_\kappa(t) &= t -\frac\kappa6 t^3 + \frac{\kappa^2}{24} t^5 + O(t^5) \\
\cos_\kappa(t) &= 1-\frac\kappa2 t^2 + \frac{\kappa^2}{24} t^4 + O(t^6) \\
\tan_\kappa(t) &= t + \frac\kappa3 t^3 +\frac{2\kappa^2}{15} t^5 + O(t^5) \\
\arctan_\kappa(x) &= x -\frac\kappa3 x^3 + \frac{\kappa^2}5 x^5 + 0(x^7).
\end{align*}

\subsubsection{Volumes}

Let $x$ be a point on $X_\kappa$, $t$ be a positive real and $u$ be a unit
tangent vector at $x$; then setting $y=\exp_x(tu)$ we can express the volume
measure $\dd y$ on $X_k$ by the formula
\[\dd y = \sin_\kappa^{n-1}(t) \,\dd t \,\dd u\]
where $\dd t$ is Lebesgue measure on $[0,+\infty)$ and $\dd u$ is the volume measure
on the unit tangent sphere $T_x^1 X_\kappa$ naturally identified with the unit round
sphere $S^{n-1}$.

In this volume formula, one can decompose the density into factors $1$ (in the direction of the ray
from the pole) and $\sin_\kappa(t)$ (in the $n-1$ orthogonal directions).
This shows that up to isometry there exists
exactly one azimuthal, conformal map $\varphi$ from the ball $B_\kappa(x,\alpha)$
such that
$D\varphi_x$ is a homothety of ratio $\sigma$ (i.e. $R'(0)=\sigma$), whose
distance function is driven by the following differential equation:
\[R'(t) = \frac{\sin_\kappa(R(t))}{\sin_\rho(t)}.\]

Moreover the $(n-1)$-dimensional volume of a geodesic sphere $S_\kappa(t)$ of radius
$t$ is
\[A_\kappa(t) := |S_\kappa(t)| = \omega_{n-1} \sin_\kappa^{n-1}(t)\]
where $\omega_{n-1}$ is the volume of $S^{n-1}$; when $\kappa>0$ we
only consider $t$ below the conjugate radius $\pi/\sqrt{\kappa}$.
We also name the volume of geodesic balls of $X_\kappa$: 
\[V_\kappa(t) := |B_\kappa(t)| = \omega_{n-1} \int_0^t \sin_\kappa^{n-1}(s) \,\dd s.\]

Given $\rho$ and $\kappa$, there is exactly one volume-preserving azimuthal map,
defined by the distance function
\[R(t) = V_\kappa^{-1}(V_\rho(t)).\]
That $R$ is as above is clearly necessary for an azimuthal map to be volume-preserving, but 
the local volume formula shows that it is also sufficient.

It is known that in $X_\kappa$ the least perimeter domains of given domains are balls,
so that the isoperimetric profile of $X_\kappa$ is given by
\[I_\kappa(V_\kappa(t))=A_\kappa(t).\]
Note that the lesser is $\kappa$, the greater is $I_\kappa$ and the
more stringent is the corresponding isoperimetric inequality.

Using the above Taylor series, we get:
\begin{align*}
A_\kappa(t) &= \omega_{n-1} t^{n-1} \left(1-\frac{(n-1)\kappa}6 t^2+ O(t^4)\right) \\
V_\kappa(t) &= \frac{\omega_{n-1}}n t^n \left(1 -\frac{n(n-1)\kappa}{6(n+2)} t^2 + O(t^4)\right) \\
I_\kappa(v) &= n^{\frac{n-1}n} \omega_{n-1}^{\frac1n} v^{\frac{n-1}n}
  -\frac{(n-1)\kappa}{2(n+2)}\cdot\frac{n^{\frac{n+1}n}}{\omega_{n-1}^{\frac1n}} 
  v^{\frac{n+1}n} +O(v^{\frac{n+3}n}). 
\end{align*}

\subsection{Candle functions and comparison}\label{sec:candle}

To study anisometry of maps under curvature bounds of the domain and range,
we will need some tools of comparison geometry, relating
the geometry of $M$ and $N$ to the geometry of $X_\rho$ and $X_\kappa$.
We will notably rely on Bishop and Günther's inequality, which in their 
common phrasing compare volume of balls. It will be useful to discuss
their more general form, which is about comparing Jacobians of exponential maps.

Given a point $x\in M$, a vector $u\in T^1_xM$ and a real number $t$,
let $y=\exp_x(tu)$ and  define the \emph{candle function} $j_x(tu)$ 
as a normalized Jacobian of the exponential map by
\[\dd y = j_x(tu) \,\dd t \,\dd u\]
where $\dd y$
denotes the Riemannian volume and $\dd u$ is the spherical
measure on $T^1_xM$. 

In the case of $X_\kappa$, this function 
does not depend on $x$ nor on $u$ and is equal to $\sin_\kappa^{n-1}(t)$.

\begin{defi}
The manifold $M$ is said to satisfy the \emph{candle} condition
$\Candle(\kappa,\ell)$ if for all $x$, $u$ and all $t\le\ell$
it holds
\[j_x(tu)\ge \sin_\kappa^{n-1}(t).\]

The manifold $M$ is said to satisfy the \emph{logarithmic candle derivative} condition
$\LCD(\kappa,\ell)$ if for all $x$, $u$ and all $t\le\ell$ it holds
\begin{equation}
\frac{j'(t)}{j(t)}\ge \frac{s'(t)}{s(t)}
\label{eq:LCD}
\end{equation}
where $j(t):=j_x(tu)$ and $s(t):=\sin_\kappa^{n-1}(t)$.
\end{defi}

The name ``candle condition'' is motivated by the fact
that $j_x$ describes the fade of the light of a candle (or of the
gravitational fields generated by a punctual mass) in $M$.

By integration, $\Candle(\kappa,\ell)$ implies that spheres and ball
of radius at most $\ell$ have volume at least as large as the volume
of the spheres and balls of equal radius in $X_\kappa$.

The candle condition is an integrated version of the
logarithmic candle derivative condition, which itself 
follows for $\ell=\inj(N)$ from the sectional curvature condition
$K\le \kappa$: this is known as G\"unther's theorem,
see \cite{GHL}. With Greg Kuperberg, we proved in \cite{KK:Gunther}
that it also follows from a weaker curvature bound, involving
the ``root-Ricci curvature''. In particular, we proved that
manifolds satisfying a relaxed bound on $K$ and a suitably strengthened bound on $\Ric$
still satisfy a LCD condition, and therefore a Candle one.

The strong form of Bishop's Theorem is that the reversed inequality in 
\eqref{eq:LCD} holds under the curvature lower bound $\Ric\ge \kappa$
(for $\ell$ the conjugate time of $X_\kappa$). The corresponding comparison
on the volumes of spheres and balls follow and are also referred to 
as Bishop's inequality.

We shall establish Theorem \ref{theo:intro-general} using the comparison of
spheres; the assumption $K_N\le \kappa$ can therefore be relaxed
to $\Candle(\kappa,\ell)$ where $\ell$ can be taken to be e.g. $\infty$
when $N$ is a Hadamard manifold or chosen suitably otherwise, see the proof below.

\subsection{Volume of ellipsoids and hyperplanes}

A couple of our arguments will rely on a simple and classical lemma,
which we state and prove for the sake of completeness.

Let $q$ be a scalar product in Euclidean space of dimension $n$, endowed
with the standard inner product $\langle\cdot,\cdot\rangle$. We shall
denote by $\sigma_1(q)$ and $\sigma_2(q)$ the largest, respectively smallest
numbers such that
\[\sigma_1(q)\langle u, u\rangle \le q(u,u) \le \sigma_2(q)\langle u,u\rangle 
  \quad\forall u\in\mathbb{R}^n\]
and say that $q$ is at most $Q$-distorted if $\sigma_2/\sigma_1\le Q$.
We shall also denote by $|q|$ the determinant of $q$, that is the ratio of the volume of 
its unit ball to the volume of Euclidean unit ball (both volumes computed with respect
to the Lebesgue measure associated to $\langle\cdot,\cdot\rangle$).

\begin{lemm}\label{lemm:ellipsoids}
Let $q_0$ be the restriction of $q$ to any hyperplane. Then we have
\[|q|\ge |q_0| \sigma_1(q) \]
and
\[|q|\ge \frac1Q |q_0|^{\frac n{n-1}}.\]
There is equality in this second inequality if and only if $q$ has eigenvalues
$\lambda$  and $Q\lambda$, with respective multiplicity $1$ and $n-1$,
and the hyperplane defining $q_0$ is the $Q\lambda$ eigenspace of $q$.
\end{lemm}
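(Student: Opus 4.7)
The plan is to reduce both inequalities to a single computation involving the Schur complement of $q$ along the hyperplane $H$. Fix an orthonormal basis $(e_1,\ldots,e_{n-1})$ of $H$ and a unit normal $e_n$. In this basis $q$ is represented by a symmetric block matrix
\[\begin{pmatrix} Q_0 & b \\ b^T & c\end{pmatrix}\]
where $Q_0$ is the matrix of $q_0$, $c = q(e_n,e_n)$, and $b$ is a column vector. The Schur complement formula yields $|q| = |q_0|\,(c - b^T Q_0^{-1} b)$, and completing the square in $v$ shows that
\[c - b^T Q_0^{-1} b \;=\; \min_{v\in H} q(e_n + v,\, e_n + v).\]
So $|q|/|q_0|$ is precisely the minimum of the quadratic form $q$ on the affine hyperplane $e_n + H$.

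The first inequality is then immediate: for any $w = e_n + v$ one has $\langle w,w\rangle = 1 + |v|^2 \ge 1$, so $q(w,w) \ge \sigma_1(q)\langle w,w\rangle \ge \sigma_1(q)$, and minimizing gives $|q| \ge \sigma_1(q)|q_0|$. For the second, I combine this with the observation that $\sigma_2(q_0)\le \sigma_2(q)$ (restricting cannot raise the supremum of the Rayleigh quotient), so every eigenvalue of $q_0$ is bounded above by $\sigma_2(q)$ and therefore $|q_0|\le \sigma_2(q)^{n-1}$, i.e.\ $\sigma_2(q)\ge |q_0|^{1/(n-1)}$. Plugging in yields
\[|q| \;\ge\; \sigma_1(q)\,|q_0| \;=\; \frac{\sigma_1(q)}{\sigma_2(q)}\cdot \sigma_2(q)|q_0| \;\ge\; \frac{1}{Q}\,|q_0|^{n/(n-1)}.\]

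The only part requiring genuine care, and hence the main (albeit minor) obstacle, is the equality case, which I would handle by tracing back through the two inequalities. Equality in the final bound forces both steps above to be tight. Tightness in the first step requires the minimizer $w^* = e_n + v^*$ of $q(w,w)$ on $e_n + H$ to satisfy simultaneously $|w^*|^2 = 1$ (so $v^* = 0$, which means $b = 0$ and $H$ is $q$-orthogonal to $e_n$) and $q(e_n,e_n) = \sigma_1(q)$ (so $e_n$ is a $\sigma_1(q)$-eigenvector). Tightness in $|q_0| \le \sigma_2(q)^{n-1}$ forces every eigenvalue of $q_0$ to equal $\sigma_2(q)$, so $q_0 = \sigma_2(q)\,\mathrm{id}_H$ and $H$ is the $\sigma_2(q)$-eigenspace. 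Setting $\lambda = \sigma_1(q)$ and $Q\lambda = \sigma_2(q)$ reproduces exactly the characterization claimed in the lemma.
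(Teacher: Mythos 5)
Your proof is correct, and it takes a genuinely different route from the paper's. The paper proves the first inequality via Cauchy interlacing: if $\lambda_1\ge\cdots\ge\lambda_n$ and $\mu_1\ge\cdots\ge\mu_{n-1}$ are the eigenvalues of $q$ and $q_0$, then $\mu_i\le\lambda_i$ for $i<n$, whence $|q_0|\le\lambda_1\cdots\lambda_{n-1}$ and $|q|\ge|q_0|\lambda_n=|q_0|\sigma_1(q)$; the second inequality then follows from the crude bound $\lambda_i\le Q\sigma_1$ for all $i<n$. Your Schur-complement argument replaces interlacing with a more self-contained identity — $|q|/|q_0|$ equals the minimum of $q$ over the affine hyperplane $e_n+H$ — and for the second inequality uses only the easy monotonicity $\sigma_2(q_0)\le\sigma_2(q)$ rather than full interlacing. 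The two approaches have comparable weight, but yours isolates the constrained minimizer $w^*=e_n-Q_0^{-1}b$, which makes the first half of the equality analysis (forcing $b=0$ and $q(e_n,e_n)=\sigma_1(q)$) cleaner than the paper's terse ``the equality case is straightforward.'' One minor imprecision: your final chain actually bundles \emph{three} inequalities, not two — the last $\ge$ combines $\sigma_1/\sigma_2\ge 1/Q$ with $|q_0|\le\sigma_2(q)^{n-1}$ — and the equality case requires all three to be tight. In particular the distortion bound must be saturated, $\sigma_2(q)=Q\sigma_1(q)$; you use this silently when you write $Q\lambda=\sigma_2(q)$, but it should be stated that it is forced rather than set. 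You also only argue the ``only if'' direction of the equality characterization; the ``if'' direction is a one-line verification that should be recorded.
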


\begin{proof}
Let $\lambda_1\ge\dots\ge\lambda_n$ be the eigenvalues of $q$ and
$\mu_1\ge\dots\mu_{n-1}$ be the eigenvalues of $q_0$.
In particular, $\lambda_n=\sigma_1(q)$. Then Rayleigh
quotients show that $\mu_i\le \lambda_i$ for all $i<n$. It follows
\begin{align*}
|q| &= \lambda_1\lambda_2\dots\lambda_n \\
  &\ge |q_0| \lambda_n = |q_0| \sigma_1(q)
\end{align*}
But by the distortion bound, we have
\[Q\sigma_1\ge\lambda_1\ge\dots\ge\lambda_n\ge\sigma_1\]
so that $(Q\sigma_1)^{n-1}\ge |q_0|$ and
\[\sigma_1 \ge \frac{|q_0|^{\frac1{n-1}}}Q\]
and the desired inequality follows. The equality case is straightforward.
\end{proof}

\section{General maps}

Assume that $\Ric_M \ge \rho$ and that $N$ satisfies $\Candle(\kappa,\ell_0)$
for some $\ell_0$ (on which we shall put some restriction latter on).

Let $\varphi: B_M(x,\alpha) \to N$ be a diffeomorphism on its image,
where $\alpha<\inj(x)$, the injectivity radius of $M$ at $x$.

In what follows, we shall assume bounds involving $1/\sqrt{\kappa}$: our convention is
that this number is $+\infty$ whenever $\kappa\le 0$.
\begin{lemm}\label{lemm:general}
If $\sigma_2(\varphi) \alpha \le \ell_0 \le \frac{\pi}{2\sqrt{\kappa}}$, we have
\[ \sigma_2(\varphi) \ge \frac{\sin_\kappa(\sigma_1(\varphi)\alpha)}{\sin_\rho(\alpha)}.\]
\end{lemm}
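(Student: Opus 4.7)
The strategy is to sandwich the $(n-1)$-volume of $\varphi(S_M(x,\alpha))$ simultaneously from above, via the Lipschitz constant $\sigma_2$ combined with Bishop's inequality on $M$, and from below, via a topological degree argument exploiting the Candle hypothesis on $N$.

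Write $S:=S_M(x,\alpha)$. By definition of $\sigma_1,\sigma_2$, the image $\varphi(S)$ lies in the annulus $\sigma_1\alpha\le d(\varphi(x),\cdot)\le\sigma_2\alpha$. Because $\sigma_2\alpha\le\ell_0$ is below the conjugate and injectivity radii at $\varphi(x)$ (as is implicit in the Candle hypothesis), polar coordinates $(t,v)\in(0,\sigma_2\alpha]\times T^1_{\varphi(x)}N$ are well defined on $\overline{B_N(\varphi(x),\sigma_2\alpha)}\setminus\{\varphi(x)\}$; let $\pi$ denote the radial projection onto $T^1_{\varphi(x)}N\cong S^{n-1}$. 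Since $\varphi$ is a diffeomorphism on the topological ball $B_M(x,\alpha)$, its image $\Omega=\varphi(B_M(x,\alpha))$ is a topological ball containing $\varphi(x)$ whose boundary is $\varphi(S)$, so $\pi_{|\varphi(S)}\colon\varphi(S)\to S^{n-1}$ has degree $\pm1$. The area formula then gives
\[\omega_{n-1}\le\int_{\varphi(S)}|\jac\pi|\,\dd\sigma.\]
At a point $p=\exp_{\varphi(x)}(tv)\in\varphi(S)$, the differential $d\pi_p$ vanishes on $\partial_t$ and, on $T_p S_N(\varphi(x),t)$, is the inverse of the Jacobi-field map, hence has Jacobian $1/j_{\varphi(x)}(tv)$. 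Since orthogonal projection onto $T_p S_N(\varphi(x),t)$ has Jacobian at most $1$, any $(n-1)$-plane $V_p\subset T_p N$ satisfies $|\jac\pi|_p\le 1/j_{\varphi(x)}(tv)$. The Candle hypothesis gives $j_{\varphi(x)}(tv)\ge\sin_\kappa^{n-1}(t)$, and the assumption $\sigma_2\alpha\le\pi/(2\sqrt{\kappa})$ ensures $\sin_\kappa$ is nondecreasing throughout the annulus, so $\sin_\kappa(t)\ge\sin_\kappa(\sigma_1\alpha)$. Substituting into the previous display yields $|\varphi(S)|\ge\omega_{n-1}\sin_\kappa^{n-1}(\sigma_1\alpha)$.

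For the matching upper bound, $\sigma_2$-Lipschitzness of $\varphi$ gives $|\varphi(S)|\le\sigma_2^{n-1}|S|$, and Bishop's inequality (from $\Ric_M\ge\rho$) gives $|S|\le\omega_{n-1}\sin_\rho^{n-1}(\alpha)$. Combining the two bounds and extracting $(n-1)$-th roots yields the desired estimate. The delicate point is the pointwise identity $|\jac\pi|=1/j_{\varphi(x)}(tv)$ on the geodesic sphere, which rests on the fact that $j_{\varphi(x)}(tv)$ is by construction the determinant of the Jacobi-field map from $T_vS^{n-1}$ into $T_pS_N(\varphi(x),t)$; together with the observation that $d\pi$ kills the radial direction (so tilting the tangent plane of $\varphi(S)$ only decreases the Jacobian), this converts the Candle condition into the sharp pointwise bound we need.
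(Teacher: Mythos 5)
Your proof is correct and uses essentially the same strategy as the paper: bound $|\varphi(S(\alpha))|$ from below via the Candle hypothesis and a radial argument, bound $|S(\alpha)|$ from above via Bishop, and compare. The only cosmetic difference is the direction of your radial argument: you pull $\varphi(S(\alpha))$ back to $T^1_{\varphi(x)}N$ via the projection $\pi$ and invoke degree, whereas the paper pushes $T^1_{\varphi(x)}N$ forward to $\varphi(S(\alpha))$ via the first-hit map $u\mapsto\gamma_u(\ell(u))$, whose Jacobian $j_u(\ell(u))/\cos\beta(u)$ is exactly the reciprocal of your $|\jac\pi|$; these are dual formulations of the same estimate. (Incidentally, you correctly applied Bishop as $|S(\alpha)|\le\omega_{n-1}\sin_\rho^{n-1}(\alpha)$; the printed ``$\ge$'' in the paper's proof at that step is a typo.)
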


The proof is a mere generalization of Milnor's argument in \cite{Milnor}.

\begin{proof}
Denote by $S(\alpha)$ the geodesic sphere of center $x$ and radius $\alpha$.
Bishop's inequality ensures that $|S(\alpha)|\ge \omega_{n-1} \sin_\rho^{n-1}(\alpha)$.
On the other hand, $\varphi(S(\alpha))$ encloses the ball of $N$ of radius
$\sigma_1 \alpha$ centered at $y=\varphi(x)$. Given a unit vector $u\in T_y N$,
let $\ell(u)$ be the first time at which $\gamma_u$ hits $\varphi(S(\alpha))$,
and $\beta(u)$ be the angle between $\dot\gamma_u(\ell(u))$ and the outward
normal to $\varphi(S(\alpha))$. We have $\ell(u)\ge \sigma_1 \alpha$ and
obviously $\cos(\beta(u))\le 1$. Moreover when $\kappa>0$, we have
\[\ell(u) \le \sigma_2 \alpha \le \frac{\pi}{2\sqrt{\kappa}}\]
so that the comparison candle function $\sin_\kappa^{n-1}$ is increasing
on $[0,\ell(u)]$.

Then, letting $j$ be the candle function of $N$
at $x$ and $\dd u$ be the usual measure on the unit Riemannian sphere, we get 
\begin{align*}
|\varphi(S(\alpha))| &\ge \int_{T_y^1 N} \frac{j_u(\ell(u))}{\cos \beta(u)} \,\dd u \\
  &\ge \int_{T_y^1 N} \sin_\kappa^{n-1}(\ell(u)) \,\dd u \\
  &\ge \omega_{n-1} \sin_\kappa^{n-1}(\sigma_1 \alpha).
\end{align*}

There is at least one point on $S(\alpha)$ at which the Jacobian of the restriction
of $\varphi$ to $S(\alpha)$ is at least 
\[\frac{|\varphi(S(\alpha))|}{|S(\alpha)|} \ge 
  \frac{\sin_\kappa^{n-1}(\sigma_1(\varphi)\alpha)}{\sin_\rho^{n-1}(\alpha)}\]
and the lemma follows.
\end{proof}

Let us now define the $\alpha$-bound $A_1$.
\begin{defi*}
Let $A_1=A_1(M,N)$ be the greatest number such that for all $\alpha\le A_1$
we have $\alpha\le \inj(M)$, if $\kappa> 0$:
\begin{equation}
\alpha\frac{\sin_\rho(\alpha)}{\sin_\kappa(\alpha)} \le 
  \min\left(\inj(N),\frac{\pi}{2\sqrt{\kappa}}\right)
\end{equation}
and if $\kappa\le 0$
\begin{equation}\frac{\alpha^2}{\sin_\kappa^{-1}\circ \sin_\rho(\alpha)} \le \inj(N).
\label{eq:A1negative}
\end{equation}
\end{defi*}

\begin{rema*}\begin{enumerate}
\item $A_1$ depends on $M$ and $N$ only through their curvature/candle bounds $\rho$ and $\kappa$
  and their injectivity radii,
\item if we do not insist on a uniform bound over possible centers, we can replace $\inj(M)$
  by $\inj(x)$,
\item if $N$ is a Hadamard manifold, then $A_1=\inj(M)$ (or $\inj(x)$),
\item if $\inj(M)$ and $\inj(N)$ are large enough, when $\kappa>0$ we have
  $A_1\ge \frac{\pi}{2\sqrt{\rho}}$ and $A_1\to \frac{\pi}{\sqrt{\rho}}$
  when $\kappa\to 0$,
\item in some cases (e.g. when one can apply Klingenberg's Theorems,
  see \cite{Cheeger-Ebin} Theorems 5.9 and 5.10),
  the curvature bound on $N$ is sufficient to get an estimate on
  $\inj(N)$, and therefore to get a bound $A'_1$ that does not depend on the injectivity
  radius of the range.
\end{enumerate}\end{rema*}

\begin{theo}\label{theo:general}
Assume $\Ric_M \ge \rho$, $N$ satisfies $\Candle(\kappa,\inj(N))$ for some
$\kappa<\rho$ (e.g. $K_N\le \kappa$) and $\alpha\le A_1(M,N)$ defined above.
Let $\varphi:B_M(x\alpha) \to N$ be any smooth map.

If $\kappa\ge 0$ then 
\[\aniso(\varphi) \ge \log\frac{\sin_\kappa(\alpha)}{\sin_\rho(\alpha)}\]
and there is equality if and only if $\varphi$ is conjugated via isometries
to the equidistant azimuthal map from $B_\rho(\alpha)$ to $B_\kappa(\alpha)$
(in particular, $B_M(x,\alpha)$ and its image must have constant curvatures
$\rho$ and $\kappa$).

If $\kappa<0$ then, letting $\sigma_0=\sigma_0(\kappa,\rho,\alpha)$ be
the number in $(0,1)$ such that
$\sin_\kappa(\sigma_0\alpha) = \sin_\rho(\alpha)$, we have
\[\aniso(\varphi) \ge \log\frac{1}{\sigma_0}\]
and there is equality if and only if $\varphi$ is
the $\sigma_0$-contracting azimuthal map
from $B_\rho(\alpha)$ to $B_\kappa(\sigma_0\alpha)$
(in particular, $B_M(x,\alpha)$ and its image must have constant curvatures
$\rho$ and $\kappa$).
\end{theo}

Notice that $\sigma_0$ is the dilation coefficient that makes the volumes
of the spheres $S_\kappa(\sigma_0 \alpha)$ and $S_\rho(\alpha)$ coincide; 
it makes the $\sigma_0$-contracting azimuthal map a non-dilating map, i.e.
$\sigma_2(\bar\varphi)=1$ when $\kappa<0$.

\begin{proof}
We can assume $\sigma_2 \alpha$ is small enough to apply Lemma \ref{lemm:general},
otherwise the way we designed $A_1$ ensures that $\sigma_2$ is so large that
$\aniso(\varphi)$ is a least the claimed lower bound.

Let us start with the $\kappa\ge 0$ case.
From Lemma \ref{lemm:general} we have
\begin{equation}
\aniso(\varphi) \ge |\log \sigma_1 | 
  + \log \sin_\kappa(\sigma_1 \alpha) -\log \sin_\rho(\alpha).
\label{eq:generalA}
\end{equation}
The derivative of the right-hand side with respect to $\sigma_1$
is
\[-\frac1{\sigma_1} +\frac\alpha{\tan_\kappa(\sigma_1 \alpha)}<0\]
when $\sigma_1<1$
and
\[\frac1{\sigma_1} +\frac\alpha{\tan_\kappa(\sigma_1 \alpha)}>0\]
when $\sigma_1>1$. This shows that the right-hand side of \eqref{eq:generalA}
achieves its minimum
when $\sigma_1=1$, so that
\[\aniso(\varphi) \ge \log \sin_\kappa(\alpha) -\log \sin_\rho(\alpha).\]

In case of equality, one must have $\sigma_1=1$ and 
$\sigma_2=\sin_\kappa(\alpha)/\sin_\rho(\alpha)$, therefore there is
equality in Lemma \ref{lemm:general}. This forces $B_M(x,\alpha)$ and 
its image to have constant curvatures $\rho$ and $\kappa$ and $S(\alpha)$
must be mapped to the geodesic sphere of radius $\alpha$ and center $\varphi(x)$
in $N$. Since $\sigma_1=1$, $\varphi$ must then map  $S(a)$ to $S_\kappa(a)$
for all $a$. Each ray must be mapped to a curve
of length at its $\alpha$ that connects $\varphi(x)$ to the boundary of the image
of $\varphi$, therefore unit rays are mapped to unit rays. The whole map $\varphi$ then
depends only on its derivative, which must preserve the norms. It follows that
$\varphi$ is azimuthal equidistant, up to isometries.

In the $\kappa<0$ case, \eqref{eq:generalA} also holds but is not optimal anymore.
Indeed, the derivative of its right-hand side is positive both
when $\sigma_1>1$ and when $\sigma_1<1$ since $\tan_\kappa(x)\le x$.
But when $\sigma_1<\sigma_0$, the lower bound on $\sigma_2$ given by Lemma \ref{lemm:general}
is less than $1$. It follows
\[\aniso(\varphi) \ge \log\frac1{\sigma_0}\]
Which is achieved by $\bar\varphi$.
The case of equality is treated as above.
\end{proof}

\begin{coro}
In the above setting,
\[\aniso(\varphi) \ge \frac{\rho-\kappa}6 \alpha^2 + O(\alpha^4)\]
where the implied constant in the remainder term only depends on
the curvature bounds.
\end{coro}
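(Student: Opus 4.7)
The plan is to derive the estimate by Taylor expanding the explicit lower bounds from Theorem \ref{theo:general}, treating the two cases $\kappa\ge 0$ and $\kappa<0$ separately and then verifying that they yield the same leading term.

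For $\kappa\ge 0$, I would start directly from the bound $\aniso(\varphi)\ge \log\sin_\kappa(\alpha)-\log\sin_\rho(\alpha)$. Using the Taylor series for $\sin_\kappa$ recalled in the toolbox, namely $\sin_\kappa(\alpha)=\alpha-\tfrac{\kappa}{6}\alpha^3+O(\alpha^5)$, I get
\[
\log\sin_\kappa(\alpha)=\log\alpha+\log\!\Bigl(1-\tfrac{\kappa}{6}\alpha^{2}+O(\alpha^{4})\Bigr)
=\log\alpha-\tfrac{\kappa}{6}\alpha^{2}+O(\alpha^{4}),
\]
and similarly for $\sin_\rho(\alpha)$. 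Subtracting gives the announced $\tfrac{\rho-\kappa}{6}\alpha^{2}+O(\alpha^{4})$.

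For $\kappa<0$, the lower bound is $\log(1/\sigma_0)$ with $\sigma_0$ defined by $\sin_\kappa(\sigma_0\alpha)=\sin_\rho(\alpha)$. Since the right-hand side tends to $\alpha$ as $\alpha\to 0$ and $\sin_\kappa$ is strictly increasing, $\sigma_0\to 1$. Writing $\sigma_0=1+\varepsilon$, plugging into the defining equation and using the Taylor series yields
\[
(1+\varepsilon)\alpha-\tfrac{\kappa}{6}(1+\varepsilon)^{3}\alpha^{3}+O(\alpha^{5})
= \alpha-\tfrac{\rho}{6}\alpha^{3}+O(\alpha^{5}),
\]
so $\varepsilon=\tfrac{\kappa-\rho}{6}\alpha^{2}+O(\alpha^{4})$ after matching powers (note $\varepsilon\alpha^{3}$ contributes only to the error). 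Then $\log(1/\sigma_0)=-\log(1+\varepsilon)=\tfrac{\rho-\kappa}{6}\alpha^{2}+O(\alpha^{4})$, matching the first case.

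There is no real obstacle here; the only mildly delicate point is to check that the implicit constant in $O(\alpha^{4})$ depends only on $\rho$ and $\kappa$ (and not on $\varphi$, $M$ or $N$), which is clear because both bounds come from Taylor series of the fixed elementary functions $\sin_\kappa$ and $\sin_\rho$ on a neighborhood of $0$, and the implicit-function inversion producing $\sigma_0$ is uniform for $\alpha$ in a compact interval determined by $\rho$ and $\kappa$ alone.
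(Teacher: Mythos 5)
Your proposal is correct and is exactly the computation the paper leaves implicit (the corollary is stated without proof, and Taylor-expanding the explicit bounds of Theorem~\ref{theo:general} in the two cases $\kappa\ge0$ and $\kappa<0$ is the intended argument). Both expansions are carried out correctly: for $\kappa\ge 0$, $\log\sin_\kappa(\alpha)-\log\sin_\rho(\alpha)=\frac{\rho-\kappa}{6}\alpha^2+O(\alpha^4)$; for $\kappa<0$, the implicit definition $\sin_\kappa(\sigma_0\alpha)=\sin_\rho(\alpha)$ gives $\sigma_0=1+\frac{\kappa-\rho}{6}\alpha^2+O(\alpha^4)$ and hence $\log(1/\sigma_0)=\frac{\rho-\kappa}{6}\alpha^2+O(\alpha^4)$. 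The only point worth flagging, which you implicitly handle, is that absorbing $\varepsilon\alpha^3$ into $O(\alpha^5)$ uses $\varepsilon=O(\alpha^2)$, which should first be read off from the leading-order balance $\varepsilon\alpha=O(\alpha^3)$ before refining the expansion; your remark about uniformity in $\rho,\kappa$ correctly explains why the implied constant depends only on the curvature bounds, while the range of validity (not the constant) depends on $A_1(M,N)$.
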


\section{Area-preserving maps}

Let us now prove Theorem \ref{theo:intro-area} in the following form.

\begin{theo}\label{theo-area}
Assume $\Ric_M\ge\rho$, $N$ satisfies the best isoperimetric inequality
holding on $X_\kappa$, and $\alpha\le \inj(x)$. Then any volume-preserving map
$\varphi:B(x,\alpha)\subset M \to N$ satisfies
\[\aniso(\varphi) \ge \frac n{n-1} \log\frac{I_\kappa\circ V_\rho(\alpha)}{A_\rho(\alpha)}\]
and equality is achieved by the unique volume-preserving azimuthal map
$\bar\varphi : B_\rho(\alpha)\to X_\kappa$.

Assume further that the only domains in $N$ satisfying the equality case
in the isoperimetric inequality are balls isometric to geodesic balls in $X_\kappa$.
Then whenever $\aniso(\varphi)=\aniso(\bar\varphi)$, the domain of 
$\varphi$ has constant curvature $\rho$ and its range is isometric to a
constant curvature ball $B_\kappa(R(\alpha))$. However, there are uncountably
many different maps achieving equality.
\end{theo}

\begin{proof}
The key point is the following Lemma, which is a direct adaptation of
Theorem 3.5 in \cite{Johnson-Morgan}.
\begin{lemm}
Under the assumption $\Ric_M \ge \rho$ and for all $\kappa\le \rho$, we have
\[\frac{I_\kappa(|B(\alpha)|)}{|S(\alpha)|} \ge \frac{I_\kappa\circ V_\rho(\alpha)}{A_\rho(\alpha)}\]

If there is equality, then $B(\alpha)$ is isometric to $B_\rho(\alpha)$.
\end{lemm}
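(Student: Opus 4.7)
Write $V=|B(\alpha)|$, $A=|S(\alpha)|$, $V_0=V_\rho(\alpha)$, $A_0=A_\rho(\alpha)$, so that the claim reads $I_\kappa(V)/A \ge I_\kappa(V_0)/A_0$. My plan is to factor this inequality through the intermediate ratio $V/V_0$: the strong form of Bishop's inequality will control the passage from areas to volumes, and the concavity of the isoperimetric profile $I_\kappa$ will then convert back to $I_\kappa$-values.

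First I would invoke the strong form of Bishop's inequality recalled in Section~\ref{sec:candle}: under $\Ric_M\ge\rho$, the quotient $t\mapsto |S(t)|/A_\rho(t)$ is non-increasing, with limit $1$ as $t\to 0$. Since $A(s)/A_\rho(s) \ge A(\alpha)/A_\rho(\alpha)$ for every $s\le\alpha$, integrating in $s$ yields
\[
\frac{V}{V_0} \;=\; \frac{\int_0^\alpha A(s)\,\dd s}{\int_0^\alpha A_\rho(s)\,\dd s} \;\ge\; \frac{A}{A_0},
\]
so in particular $V\le V_0$.

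Next, $I_\kappa$ satisfies $I_\kappa(0)=0$ and is concave on $[0,V_0]$: this is standard for $\kappa\le 0$, and for $\kappa>0$ it holds as long as $V_0$ stays below $|X_\kappa|/2$, which is enforced in the regime of interest by $\kappa<\rho$ together with $\alpha\le\inj(x)$. Concavity and vanishing at $0$ make $v\mapsto I_\kappa(v)/v$ non-increasing, so $V\le V_0$ gives
\[
\frac{I_\kappa(V)}{I_\kappa(V_0)} \;\ge\; \frac{V}{V_0}.
\]
Multiplying the two displays produces the desired inequality. For the equality case, sharpness in the first display forces $A(s)/A_\rho(s)$ to be constant on $[0,\alpha]$; since its limit at $0$ is $1$, this means $|S(s)|=A_\rho(s)$ for all $s\le\alpha$, and the rigidity half of Bishop's theorem identifies $B_M(x,\alpha)$ with $B_\rho(\alpha)$.

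The point I expect to require the most care is verifying the concavity of $I_\kappa$ on $[0,V_0]$ when $\kappa>0$, where one must genuinely stay below the half-sphere volume of $X_\kappa$; the Taylor expansion of $I_\kappa$ given in the toolbox should be enough to pin this down quantitatively. A backup route in case this turns out to be problematic would be to run the argument differentially on $[0,\alpha]$, comparing $t\mapsto I_\kappa(V(t))/A(t)$ to its model counterpart via a Gronwall-type inequality produced directly from the $\mathrm{LCD}$ condition.
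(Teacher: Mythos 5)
Your proof is correct and follows essentially the same route as the paper's: strong Bishop gives the monotonicity of $|S(t)|/A_\rho(t)$, integration transfers this to $|B(\alpha)|/V_\rho(\alpha)$, and the concavity of $I_\kappa$ (equivalently, the fact that $v\mapsto I_\kappa(v)/v$ is non-increasing since $I_\kappa(0)=0$) closes the argument; the equality case is handled identically via the limit $|S(t)|/A_\rho(t)\to 1$ and Bishop rigidity. The only substantive difference is the caution you flag about concavity for $\kappa>0$, which is actually unnecessary: the isoperimetric profile of a round sphere is concave on the \emph{entire} interval $[0,|X_\kappa|]$, not merely below the hemisphere volume (a short computation gives $I''(v)=-(n-1)/(A_\kappa(t)\sin_\kappa^2 t)<0$ with $V_\kappa(t)=v$), so neither the bound $V_0<|X_\kappa|/2$ nor the proposed Gronwall backup is needed.
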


\begin{proof}[Proof of Lemma]
Setting $\delta_0 := \frac{|S(\alpha)|}{A_\rho(\alpha)}$, the strong form of Bishop's inequality
yields for all $t\le \alpha$
\[\delta_0 \le \frac{|S(t)|}{A_\rho(t)} \le 1.\]
By integration, it comes
\[\delta_0\le\delta_1 := \frac{|B(\alpha)|}{V_\rho(\alpha)} \le 1.\]
Since $I_\kappa$ is concave, we have $I_\kappa(\delta_1 V_\rho(\alpha)) 
\ge \delta_1 I_\kappa(V_\rho(\alpha))$; therefore
\[I_\kappa(|B(\alpha)|) \ge \delta_0 I_\kappa(V_\rho(\alpha)) 
   = |S(\alpha)|\frac{I_\kappa(V_\rho(\alpha))}{A_\rho(\alpha)}.\]

In case of equality, we must have $\delta_0=\delta_1$, which implies
$\delta_0=\delta_1=1$. The equality case in Bishop's inequality then implies
that $B(\alpha)$ is isometric to $B_\rho(\alpha)$.
\end{proof}

Now, since $\varphi$ is volume-preserving and $N$ satisfies an isoperimetric
inequality, we have
\[|\varphi(S(\alpha))|\ge I_\kappa(|B(\alpha)|) 
\ge |S(\alpha)|\frac{I_\kappa(V_\rho(\alpha))}{A_\rho(\alpha)}.\]
Then, there must be a point $x$ on $S(\alpha)$ such that the Jacobian of
$\varphi_{|S(\alpha)}$ is at least
$\frac{I_\kappa(V_\rho(\alpha))}{A_\rho(\alpha)}$
so that 
\[\sigma_2 \ge \left(\frac{I_\kappa(V_\rho(\alpha))}{A_\rho(\alpha)} \right)^{\frac1{n-1}}\]
but also, since $\jac \varphi_x = 1$, Lemma \ref{lemm:ellipsoids}
shows that a direction transverse to the boundary
must be contracted by $\varphi$ and
\[\sigma_1 \le \left(\frac{I_\kappa(V_\rho(\alpha))}{A_\rho(\alpha)} \right)^{-1}.\]
These two bounds combined imply the desired inequality on $\aniso(\varphi)$.

It is straightforward to see that the unique volume-preserving azimuthal map
$\bar\varphi : B_\rho(\alpha)\to X_\kappa$ realizes equality. Moreover,
if there is equality then there must be equality in the Lemma,
so that $B(\alpha)$ is isometric to $B_\rho(\alpha)$, and
there must be equality in the isoperimetric inequality on $N$.

However, $\bar\varphi$ is far from being the only optimal map: both $\sigma_1$
and $\sigma_2$ are realized on the boundary, and for all $t<\alpha$,
\[\sigma_1(\bar\varphi_{|B_\rho(t)}) > \sigma_1(\bar\varphi) \quad\mbox{and}\quad
     \sigma_2(\bar\varphi_{|B_\rho(t)}) < \sigma_2(\bar\varphi).\]
If we compose $\bar\varphi$ with any diffeomorphism of $B_\rho(\alpha)$
close to identity and supported on some $B_\rho(t)$, we get another optimal map.
\end{proof}

\section{Conformal and quasiconformal maps}

The following result is the heart of our results for quasiconformal
maps; it's formulation has been chosen to avoid repetition of
arguments while keeping as much flexibility as we shall need, and it is 
therefore rather technical.
\begin{theo}[Main quasiconformal inequality]
Assume $\varphi$ is a $Q$-quasiconformal maps from
$B_M(x,\alpha)$ to $N$, where $\Ric_M\ge \rho$ and
$N$ satisfies the isoperimetric inequality of $X_\kappa$.

Let $G_\kappa$ be the function defined by
\[G_\kappa(x)= \sin_\kappa(2\arctan_\kappa(x))=\frac{2x}{1+\kappa x^2}\]

If $\kappa>0$, assume further that the volume of the image of
$\varphi$ is not greater than the volume $\frac12|X_\kappa|$
of an hemisphere of curvature $\kappa$.

Then, for all $\beta<\alpha$ we have
\[\sigma_2(\varphi) \ge \frac{G_\kappa\left(\tan_\kappa(\frac{r(\beta)}2)\cdot
    \left(\frac{\tan_\rho(\frac\alpha2)}{\tan_\rho(\frac\beta2)}\right)^{\frac1Q}\right)}%
    {\sin_\rho(\alpha)}\]
where $r(\beta)$ is the radius of a ball in $X_\kappa$ that
has the same volume as the image of $B_M(x,\beta)$.
\end{theo}

The proof of this inequality follows a simple idea: at each time $t$,
the isoperimetric inequality forces the image of the sphere of radius $t$
to have large volume, and the quasiconformality then translate this
into a large increase in the volume of the image of the ball. These two effects
therefore amplify one another. At $t=\alpha$, we get a lower bound on $V(\alpha)$,
and using the isoperimetric inequality again we bound from below the perimeter
of the image of the $\alpha$-ball. Comparing with the perimeter of the ball, we get a lower
bound on $\sigma_2$.

\begin{proof}[Proof of the main quasiconformal inequality]
For convenience, for all $t\in(0,\alpha)$ set 
$V(t) = |\varphi(B(t))|$. In particular, $r(t)=V_\kappa^{-1}(V(t))$.

Using Hölder's inequality we get
\begin{align*}
V'(t) &= \int_{S(t)} |\jac\varphi|(y) \,\dd y \\
  &\ge \frac{\left(\int_{S(t)} |\jac\varphi|^{\frac{n-1}n}(y) \,\dd y\right)^{\frac{n}{n-1}}}{%
       |S(t)|^{\frac1{n-1}}}
\end{align*}
where $\jac\varphi(y)$ is the Jacobian of $\varphi$ at $y$. Let $\varphi_0$ be the restriction
of $\varphi$ along $S(t)$: using Lemma \ref{lemm:ellipsoids},
Bishop's inequality and the isoperimetric inequality on $N$ it comes
\begin{align}
V'(t) &\ge \frac{\frac{1}{Q} \left(\int_{S(t)} |\jac \varphi_0|(y) 
               \,\dd y\right)^{\frac{n}{n-1}}}{|S(t)|^{\frac1{n-1}}} \nonumber\\
  &\ge \frac{|\varphi(S(t))|^{\frac{n}{n-1}}}{Q|S_\rho(t)|^{\frac{1}{n-1}}} \nonumber\\
V'(t)  &\ge \frac{I_\kappa(V(t))^{\frac{n}{n-1}}}{%
    Q\omega_{n-1}^{\frac{1}{n-1}}\sin_\rho(t)}. \label{eq:proof-conf1}
\end{align}

Let $F=F_{\kappa,Q}$ be defined by
\[F \circ V_\kappa(t) = Q \log \tan_\kappa(t/2)\]
and let us compute $F'$:
\begin{align}
\frac{\dd }{\dd x}(F\circ V_\kappa(x)) &= \frac{\dd }{\dd x}(Q\log \tan_\kappa(x/2)) \nonumber\\
V_\kappa'(x) F'(V_\kappa(x)) &= \frac{Q}{\sin_\kappa(x)} \nonumber\\
F'(V_\kappa(x)) &= \frac{Q}{A_\kappa(x)\sin_\kappa(x)} \nonumber\\
  &= \frac{Q\omega_{n-1}^{\frac1{n-1}}}{A_\kappa(x)^{1+\frac1{n-1}}} \nonumber\\
  &= \frac{Q\omega_{n-1}^{\frac1{n-1}}}{(I_\kappa\circ V_\kappa(x))^{\frac{n}{n-1}}} \nonumber\\
F' &= \frac{Q\omega_{n-1}^{\frac1{n-1}}}{I_\kappa^{\frac{n}{n-1}}} \label{eq:proof-conf2}
\end{align}

From \eqref{eq:proof-conf1} and \eqref{eq:proof-conf2} it comes
\[F'(V(t))V'(t) \ge \frac1{\sin_\rho(t)}\]

As above, $\log(\tan_\rho(t/2))$ defines an antiderivative
of $1/\sin_\rho(t)$ and integrating we conclude
\begin{equation}
F(V(\alpha))-F(V(\beta)) \ge \log \frac{\tan_\rho(\alpha/2)}{\tan_\rho(\beta/2)}
\label{eq:qcinequality}
\end{equation}
since $F'$ is a positive function, $F$ is increasing and invertible, so that the above
inequality gives a lower bound on $V(\alpha)$;
using $|\varphi(S(\alpha))|\ge I_\kappa(V(\alpha))$ and proceeding
as in the proof of Theorem \ref{theo:general}, we get 
\begin{align*}
\sigma_2(\varphi) &\ge \left(\frac{I_\kappa(V(\alpha))}{|S(\alpha)|}\right)^{\frac1{n-1}} \\
  &\ge \frac{(I_\kappa\circ F^{-1})^{\frac1{n-1}}\left(F(V(\beta) + 
    \log(\frac{\tan_\rho(\alpha/2)}{\tan_\rho(\beta/2)})\right)}%
    {\omega_{n-1}^{\frac1{n-1}}\sin_\rho(\alpha)}
\end{align*}
(beware that exponent $\frac1{n-1}$ is a multiplicative power while exponent $-1$ stands for 
inverse function).

Now
\begin{align*}
(I_\kappa\circ F^{-1})^{\frac1{n-1}} 
  &= (I_\kappa\circ V_\kappa\circ (F\circ V_\kappa)^{-1})^{\frac1{n-1}} \\
  &= (A_\kappa \circ (F\circ V_\kappa)^{-1})^{\frac1{n-1}}\\
  &= \omega_{n-1}^{\frac1{n-1}} \sin_\kappa \circ (F\circ V_\kappa)^{-1}
\end{align*}
and the desired inequality follows from the identity
\[\sin_\kappa(2\arctan_\kappa x)=\frac{2x}{1+\kappa x^2}.\]
\end{proof}

\begin{rema*}
In the above proof, two small difficulties are hidden.
\begin{enumerate}
\item When $\kappa>0$, $I_\kappa$ is decreasing
beyond the volume of an hemisphere; this is why we assumed an upper bound on $V(\alpha)$.
\item When $\kappa<0$, $F$ has bounded image so that $F^{-1}$ is not defined on the whole
positive axis. Our proof shows that any quasiconformal map
must map  small balls to domains of relatively small volume (bounded in terms of 
$\alpha,\kappa,\rho$ and the radius of the considered ball), 
for otherwise the differential inequality on $V(t)$ would blow up in time less
than $\alpha$ and the map would not have compact image. This is the base to
a generalization of the Schwarz-Pick-Ahlfors lemma by Gromov, see the appendix.
\end{enumerate}
\end{rema*}

\begin{defi*}
Let $A_3=A_3(M,\kappa,\rho,n)$ be defined as the greatest real number such that
for all $\alpha \le A_3$ it holds
\begin{itemize}
\item $\alpha\le \inj(M)$,
\item if $\kappa>0$, 
\[\left(\frac{|X_\kappa|}{2 V_\rho(\alpha)}\right)^{\frac1n} \ge
  1+(\rho-\kappa)\frac{\tan_\rho^2(\alpha/2)}{1+\kappa\tan_\rho^2(\alpha/2)}.\]
\end{itemize}
\end{defi*}

Let us now prove Theorem \ref{theo:intro-conformal} which we restate as follows.
\begin{theo}\label{theo:conformal}
Assume $\Ric_M\ge\rho$, $N$ satisfies the best isoperimetric inequality
holding on $X_\kappa$ and $\alpha\le A_3(M,\kappa,\rho,n)$.
For the equality case below, assume further that
any domain $\Omega\subset N$ such that $|\partial\Omega|=I_\kappa(|\Omega|)$
is isometric to a geodesic ball in $X_\kappa$.
Let $\varphi:B(x,\alpha)\subset M \to N$ be a conformal map.

If $\kappa> 0$, then 
\[\aniso(\varphi) \ge \log\left(1+(\rho-\kappa)\frac{\tan_\rho^2(\alpha/2)}%
  {1+\kappa\tan_\rho^2(\alpha/2)}\right)\]
with equality when $\varphi$ is conjugated
by isometries the conformal azimuthal map $B_\rho(\alpha)\to X_\kappa$
with $R'(0)=1$.

If $\kappa=0$, then 
\[\aniso(\varphi) \ge \log\left(1+\rho\tan_\rho^2(\alpha/2)\right)\]
with equality when $\varphi$ is conjugated by isometries 
to a conformal azimuthal map $B_\rho(\alpha)\to X_0=\mathbb{R}^n$
with $R'(0)\le 1$ and $\sigma_2\ge 1$ (e.g. $R'(0)=1$).

If $\kappa<0$, then 
\[\aniso(\varphi) \ge \log \frac{-2\kappa \sin_\rho^2(\frac\alpha2)}%
  {\sqrt{1-\kappa\sin_\rho\alpha}-1}\]
with equality when $\varphi$ is conjugated
by isometries the conformal azimuthal map $B_\rho(\alpha)\to X_\kappa$ 
that induces an isometry on the boundaries (or, equivalently, that
preserves volumes along the boundary).
\end{theo}

\begin{rema*}
In the case $\kappa=0$ it is easy to compare the bound for general maps
and conformal ones. When $\alpha\to0$, this will be done more generally below;
when $\alpha\to\pi$, both lower bounds go to infinity, but in the conformal case
it does so twice as fast (after taking logs!) in the sense that
\[\frac{\aniso(\bar\varphi_c)}{\aniso(\bar \varphi)} \to 2\]
where $\bar\varphi$ and $\bar\varphi_c$ denote the optimal azimuthal maps
for radius $\alpha$ in the general and conformal cases, respectively.
Conformality thus appears to have a significant effect on anisometry.
\end{rema*}

\begin{proof}
The bound $A_3$ has been designed so that either $B(\alpha)$
is mapped to a domain so large that at some point $y$
$D\varphi_y$ itself must have anisometry at least equal to the claimed
bound, or the volume of $\varphi(B(\alpha))$ is at most $\frac12|X_\kappa|$
and we can use the main quasiconformal inequality with $Q=1$ and $\beta\to0$.

Since $r(\beta)\ge \sigma_1 \beta + o(\beta)$,
when $\beta\to0$ we have
\[\frac{\tan_\kappa(\frac{r(\beta)}2)}{\tan_\rho(\beta/2)} \ge \sigma_1 + o_\beta(1)\]
and we obtain
\[\sigma_2(\varphi) \ge \bar\sigma_2 
  :=\frac{G_\kappa(\sigma_1 \tan_\rho(\alpha/2))}{\sin_\rho(\alpha)}.\]
We would like to optimize in $\sigma_1$ the corresponding bound 
\[f(\sigma_1) := |\log \sigma_1 |+|\log \bar\sigma_2(\sigma_1)|\]
on $\aniso(\varphi)$.

For this, we observe that for all positive $\sigma$, the number $f(\sigma)$
is the anisometry of a conformal map with co-Lipschitz coefficient equal to $\sigma$.
For this, let 
\[\Phi_{\sigma,\kappa} : B_\rho(\alpha) \to X_\kappa\]
be the unique conformal azimuthal map such that $\sigma_1(\Phi_{\sigma,\kappa})=\sigma$
(i.e., its distance function satisfies $R_{\sigma,\kappa}'(0)=\sigma$).
Then following the proof of the main quasiconformal inequality with $Q=1$, we see
that all inequalities are equalities so that indeed $f(\sigma)=\aniso(\Phi_{\sigma,\kappa})$.

Moreover, if $\aniso(\varphi) = f(\sigma_1)$ (recall that $\sigma_1$ stands for $\sigma_1(\varphi)$) 
then we must have equality
in all inequalities in the proof of the main quasiconformal inequality, and this implies
that $\varphi$ and $\Phi_{\sigma_1,\kappa}$ are conjugated by isometries.

Observe that $\aniso(\Phi_{\sigma,\kappa})$ is decreasing with $\kappa$,
and increasing with $\sigma$ whenever 
\begin{equation}
\sigma\le1\le\sigma_2(\Phi_{\sigma,\kappa}).
\label{eq:range}
\end{equation}
It is clear that the minimum of $f(\sigma)$ occurs in this range. Observe further
that 
\[\aniso(\Phi_{\sigma,\kappa})=\aniso(\Phi_{\sigma/\lambda,\lambda^2 \kappa})\]
whenever $\sigma$ and $\sigma/\lambda$ both are in the range \eqref{eq:range},
since $\Phi_{\sigma/\lambda,\lambda^2 \kappa}$ is the composition
of $\Phi_{\sigma,\kappa}$ with a homothety of ratio $\lambda$.

When $\kappa>0$, if $\sigma<1$ is in the above range then we get
\[\aniso(\Phi_{\sigma,\kappa})=\aniso(\Phi_{1,\sigma^2 \kappa})>\aniso(\Phi_{1,\kappa})\]
so that $f(\sigma_1)\ge f(1)$ with equality if and only if $\sigma=1$.

When $\kappa<0$, if $1< \bar\sigma_2(\sigma) = \sigma_2(\Phi_{\sigma,\kappa})$, then
\[\aniso(\Phi_{\sigma,\kappa})=
  \aniso(\Phi_{\sigma/\bar\sigma_2,\bar\sigma_2^2 \kappa})>\aniso(\Phi_{\sigma/\bar\sigma_2,\kappa})\]
so that $f(\sigma)$ reaches its unique minimum for the value of $\sigma$
such that $\sigma_2(\Phi_{\sigma,\kappa})=1$.

When $\kappa=0$, $f$ is constant on the range \eqref{eq:range}.



Note that we could also have proceeded via calculus:
setting $x=\sigma_1\tan_\rho(\alpha/2)$
we then have
\begin{align*}
\frac{\dd }{\dd \sigma_1} e^{f(\sigma_1)} &=\frac1{\sigma_1^2} 
  \left(x G_\kappa'(x) - G_\kappa(x)\right) \\
  &= \frac{-4\kappa x^3}{\sigma_1^2(1+\kappa x^2)^2}
\end{align*}
Therefore, if $\kappa>0$ then $f$ has its only minimum when $\sigma_1=1$,
and if $\kappa<0$ then $f$ has its only maximum when $\bar\sigma_2(\sigma_1)=1$.
When $\kappa=0$, any value of $\sigma_1$ between this two cases yields the same
result.

We only have left to compute $\min f$.
When $\kappa\ge0$, we get
\[\aniso(\varphi) \ge \log\frac{G_\kappa (\tan_\rho(\frac\alpha2))}{\sin_\rho(\alpha)}.\]
Then, using 
\[\sin_\rho(\alpha) = \frac{2\tan_\rho(\frac\alpha2)}{1+\rho\tan_\rho^2(\frac\alpha2)}\]
we easily get the claimed inequality.

When $\kappa<0$, the minimum of $f$ is attained when $\bar\sigma_2=1$ and, therefore,
$\sigma_1$ is such that $G_\kappa(\sigma_1\tan_\rho(\frac\alpha2))=\sin_\rho(\alpha)$.
Since at this point we have $f(\sigma_1)=-\log(\sigma_1)$, we only have to inverse
$G_\kappa$ to get the desired inequality.
\end{proof}

\begin{coro}
If $\varphi$ is conformal, then
\[\aniso(\varphi) \ge \frac{\rho-\kappa}4 \alpha^2 + o(\alpha^2)\]
where the remainder depends on the curvature bounds and $N$.
\end{coro}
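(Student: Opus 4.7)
The plan is to Taylor-expand at $\alpha = 0$ each of the three case-by-case lower bounds on $\aniso(\varphi)$ supplied by Theorem \ref{theo:conformal}, and check that every one of them has $\frac{\rho-\kappa}{4}\alpha^2$ as its leading term.

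For $\kappa\ge 0$ the computation is essentially immediate. Substituting $\tan_\rho(\alpha/2) = \alpha/2 + O(\alpha^3)$ into
\[\log\!\left(1+(\rho-\kappa)\frac{\tan_\rho^2(\alpha/2)}{1+\kappa\tan_\rho^2(\alpha/2)}\right)\]
(which specializes correctly to the $\kappa=0$ bound of the theorem), the fraction inside reduces to $(\rho-\kappa)\alpha^2/4 + O(\alpha^4)$, and the expansion $\log(1+x) = x + O(x^2)$ yields the estimate.

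For $\kappa<0$ the bound
\[\log\!\frac{-2\kappa\sin_\rho^2(\alpha/2)}{\sqrt{1-\kappa\sin_\rho^2(\alpha)}-1}\]
is more delicate because the leading $O(\alpha^2)$ terms of numerator and denominator both equal $-\kappa\alpha^2/2$ and cancel. Using $\sin_\rho^2(\alpha/2) = \alpha^2/4 - \rho\alpha^4/48 + O(\alpha^6)$, the numerator becomes $-\kappa\alpha^2/2 + \kappa\rho\alpha^4/24 + O(\alpha^6)$; using $\sin_\rho^2(\alpha) = \alpha^2 - \rho\alpha^4/3 + O(\alpha^6)$ together with $\sqrt{1+u} = 1 + u/2 - u^2/8 + O(u^3)$ applied to $u = -\kappa\sin_\rho^2(\alpha)$, the denominator becomes $-\kappa\alpha^2/2 + \kappa\rho\alpha^4/6 - \kappa^2\alpha^4/8 + O(\alpha^6)$. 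Factoring $-\kappa\alpha^2/2$ from both and expanding the resulting ratio of power series yields $1 + \frac{\rho-\kappa}{4}\alpha^2 + O(\alpha^4)$, whose log is $\frac{\rho-\kappa}{4}\alpha^2 + o(\alpha^2)$.

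The main obstacle is precisely the $\kappa<0$ computation: the exact cancellation at second order forces one to carry every Taylor expansion (of the two squared sines and of the square root) two orders further than would otherwise be needed, and one must track $\kappa^2\alpha^4$ and $\kappa\rho\alpha^4$ contributions separately to see they recombine into the predicted $(\rho-\kappa)\alpha^2/4$. The uniform leading coefficient across all three sign regimes for $\kappa$ is expected on geometric grounds -- infinitesimally the sign of the target curvature should not matter -- but verifying it directly from the theorem's case-by-case statement still requires this three-fold check.
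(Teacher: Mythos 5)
Your proposal is correct and takes essentially the approach the paper intends: the corollary is stated immediately after Theorem~\ref{theo:conformal} without its own proof, so the intended argument is precisely a Taylor expansion of the theorem's three case-by-case lower bounds, which is what you carry out. Your computations are all accurate, including the delicate $\kappa<0$ cancellation where you correctly read the denominator as $\sqrt{1-\kappa\sin_\rho^2(\alpha)}-1$; the paper's displayed formula omits the square on $\sin_\rho\alpha$, but the derivation in the proof of Theorem~\ref{theo:conformal} (solving $G_\kappa(\sigma_1\tan_\rho(\alpha/2))=\sin_\rho(\alpha)$ for $\sigma_1$) confirms that the square belongs there, so you have silently and correctly repaired a typo.
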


As mentioned above, we can even release the assumption on $M$ and $N$ to be $\Scal_M\ge\rho$ and
$\Scal_N\le\kappa$: the infinitesimal Bishop inequality holds true under
a scalar curvature bound, and so does the isoperimetric inequality as proved by Druet
\cite{Druet}.

\begin{theo}\label{theo:qc}
Assume $\Ric_M\ge\rho$ and $N$ satisfies the best isoperimetric inequality
holding on $X_\kappa$. For the equality case below, assume further that
any domain $\Omega\subset N$ such that $|\partial\Omega|=I_\kappa(|\Omega|)$
is isometric to a geodesic ball in $X_\kappa$.
Let $\varphi:B(x,\alpha)\subset M \to N$ be a $Q$-conformal map.

If $\kappa\ge 0$, let $\bar\varphi:B_\rho(\alpha)\to X_\kappa$ be the azimuthal map
whose distance function satisfies
\[R(t) = t \mbox{ when }t\le \beta \quad R'(t)=\frac{\sin_\kappa(R(t))}{Q\sin_\rho(t)} 
  \mbox{ when }t\ge \beta\]
where $\beta>0$ is such that
\[\frac{\sin_\kappa(\beta)}{Q\sin_\rho(\beta)}=1.\]
There is a positive number $A_4=A_4(M,N,Q)$ such that if $\alpha\le A_4$
then 
\[\aniso(\varphi) \ge \aniso(\bar\varphi)\]
and there is equality if and only if $\varphi$ and $\bar\varphi$ are conjugated by
isometries (except in the case $\kappa=0$ where the conjugating map on the range
can be a homothety).

If $\kappa <0$, let $\bar\varphi:B_\rho(\alpha)\to X_\kappa$ be the azimuthal map
whose distance function satisfies
\[R(t) = \sigma t \mbox{ when }t\le \beta \quad R'(t)=\frac{\sin_\kappa(R(t))}{Q\sin_\rho(t)} 
  \mbox{ when }t\ge \beta\]
where $\beta>0$ is such that
\[\frac{\sin_\kappa(\sigma \beta)}{Q\sin_\rho(\beta)}=\sigma\]
and $\sigma$ is such that $\sigma_2(\bar\varphi)=1$ (in particular,
$\bar\varphi$ induces an isometry on the boundary).
Then whenever $\alpha\le\inj(x)$ we have
\[\aniso(\varphi) \ge \aniso(\bar\varphi)\]
and there is equality if and only if $\varphi$ and $\bar\varphi$ are conjugated by
isometries.
\end{theo}

\begin{proof}
The proof follows exactly the same lines as the proof of Theorem \ref{theo:conformal},
using the quasi-conformal inequality with the chosen $\beta$ and $Q$.
Fixing $\rho$ and $\alpha$, for all given $\sigma$ and $\kappa$ we construct a comparison map
\[\bar\varphi_{\sigma,\kappa}:B_\rho(\alpha)\to X_\kappa\]
as in the conclusion of the Theorem. We remark that, restricted to $B_\rho(\beta)$,
$\bar\varphi$ is both the least anisometrical map and $Q$-conformal. Then for larger
radii, $A_4$ is designed so that either the anisometry bound is true or
the main quasiconformal inequality shows that $\bar\varphi_{\sigma_1(\varphi),\kappa}$
has lesser anisometry than $\varphi$.

To get the desired conclusion, we only have left to optimize the anisometry of
these comparison maps in $\sigma$. This does not differ from the conformal case.

We do not give
explicit values for the lower anisometry bounds, but they can be obtained explicitly
from the above computations (though probably not in closed form).
\end{proof}

\begin{rema}
It can be checked that in Theorem \ref{theo:qc}
the optimal azimuthal map is $C^1$ but not $C^2$ when $Q>1$.
\end{rema}

\section{Appendix: the generalized Schwarz-Pick-Ahlfors lemma}\label{sec:appendix}

The method used to prove the main quasiconformal inequality
was already used by Gromov \cite{Gromov-Lafontaine-Pansu,Gromov:pseudoh}
and Pansu \cite{Pansu} in relation with generalizations of Ahlfors lemma.

The classical Schwarz lemma says that a holomorphic
map $f:\Delta\to \Delta$ from the unit disc to itself, such that
$f(0)=0$ must satisfy $|f'(0)|\le 1$ and, in case of equality,
$f$ must be a rotation. Pick reinterpreted this result by endowing
the disc with its hyperbolic metric: the lemma then amounts to say that
any conformal map from $\mathbb{H}^2$ to $\mathbb{H}^2$ must be non-dilating
in the hyperbolic metric,
and if at any point its Jacobian has modulus $1$ then the map must be
a hyperbolic isometry. Then, Ahlfors extended this result to conformal
maps from a surface with curvature bounded below by $-1$ to a surface with
curvature bounded above by $-1$. This had a lasting impact on several fields
of mathematics. Among possible generalization to higher dimensions, one
that fits particularly well with the content of the present article is the following.

\begin{theo}\label{theo:Ahlfors}
Let $M$ and $N$ be complete manifolds of the same dimension (at least $2$)
with $\Ric_M\ge -1$ and $K_N\le -1$, and let
$\varphi:M\to N$ be a smooth conformal map.
If the Cartan-Hadamard conjecture holds, then $|\jac\varphi(x)|\le 1$ for all
$x\in M$, and if there is equality at any one point, then $\varphi$ lifts
to an isometry of the universal coverings $\tilde\varphi : \tilde M \to \tilde N$
(in particular, $M$ and $N$ have constant curvature $-1$).
\end{theo}

The current knowledge gives us the conclusion unconditionally when $N$
is the real hyperbolic space, and when the dimension is $2$ or $3$.

The  above result can hardly be considered new, but we could not find a written proof;
we therefore provide one.

\begin{proof}
Let $\tilde\varphi:\tilde M \to \tilde N$ be the lift to $\tilde N$
of the composition of the universal covering map $\pi: \tilde M \to M$
with $\varphi$. Then $\tilde \varphi$ is a smooth conformal map with the same
local behavior as $\varphi$.

We apply to $\tilde\varphi$ inequation \eqref{eq:qcinequality} from the proof of
the main quasiconformal inequality. 
As in the beginning of the proof of Theorem \ref{theo:conformal},
with $Q=1$ and $\beta\to 0$ we get for all $x\in \tilde M$ and all $\alpha>0$:
\[\tanh(\frac{r(\alpha)}2) \ge \sigma_0 \tanh(\frac\alpha2)\]
where $r(\alpha)$ is the radius of a ball in hyperbolic space
whose volume equals $|\tilde\varphi (B_{\tilde M}(x,\alpha))|$,
and $\sigma_0$ is the conformal dilation factor at $x$ (i.e.
$\sigma_0^n = |\jac \tilde\varphi(x)|$).

If we had $\sigma_0>1$, then for large enough $\alpha$ the above
inequality would yield $r(\alpha)\ge\infty$, a contradiction.
Therefore, $\sigma_0\le 1$ independently of $x$. Together with
the conformality of $\tilde\varphi$, this implies that
$\tilde\varphi$ is distance-nonincreasing.

If $\sigma_0=1$ (for one given $x$), then we have from the above inequality
$r(\alpha)\ge\alpha$ for all $\alpha$,
so that $\tilde\varphi$ maps balls of volume at most $V_{-1}(r)$ to balls
of volume at least $V_{-1}(r)$, while not increasing distances. This implies
that we have equalities in the Bishop and Günther inequalities, so that
$\tilde M$ and $\tilde N$ both have constant curvature $-1$ and
$\tilde\varphi$ is an isometry. 
\end{proof}

\begin{rema*}
\begin{enumerate}
\item The above result may seem weak in the sense that it asks for a conformal map,
which may not exist for given $M$ and $N$. However the hypothesis cannot
be weakened to quasiconformal as there are local $Q$-quasiconformal
diffeomorphisms of arbitrarily high supremum of the Jacobian. Using the
above method one can
only get bounds on averaged Jacobians, i.e. volume of balls.
\item Theorem \ref{theo:Ahlfors} can be interpreted as follows: given a
manifold $M$, if one can find in the same 
conformal class two complete metrics $g$ and $\sigma g$
such that $\Ric_g\ge -1$ and $K_{\sigma g}\le -1$, then
$\sigma$ is uniformly bounded above by $1$, and if there is a point
at which $\sigma(x)=1$ then $\sigma \equiv 1$. 
\end{enumerate}
\end{rema*}

\bibliographystyle{smfalpha}
\bibliography{biblio}

\end{document}